\newtheorem{thm}{Theorem}[section]
\newtheorem{thmx}{Theorem}
\newtheorem{cor}[thm]{Corollary}
\newtheorem{lem}[thm]{Lemma}
\newtheorem{prop}[thm]{Proposition}
\theoremstyle{definition}
\theoremstyle{remark}
\newtheorem{rem}[thm]{Remark}
\theoremstyle{example}
\numberwithin{equation}{section}
\begin{document}
\title[Determination of a jump]{Determination of a jump by Fourier and Fourier-Chebyshev series}
\author{Muharem Avdispahi\'{c}, Zenan \v{S}abanac}
\address{Department of Mathematics, University of Sarajevo, Zmaja od Bosne 33-35, 71000 Sarajevo, Bosnia and Herzegovina}
\email{mavdispa@pmf.unsa.ba, zsabanac@pmf.unsa.ba}
\subjclass[2010]{42A24, 42C10}
\keywords{Fourier-Chebyshev series, generalized bounded variation,  jump discontinuities}
\maketitle

\begin{abstract}
By observing the equivalence of assertions on determining the jump of a
function by its differentiated or integrated Fourier series, we generalize a
previous result of Kvernadze, Hagstrom and Shapiro to the whole class of
functions of harmonic bounded variation and without finiteness assumption on
the number of discontinuities. Two results on determination of jump
discontinuities by means of the tails of integrated Fourier-Chebyshev series are derived.
\end{abstract}

\section{Introduction}

\label{intro} \ \ The problem of approximating the magnitudes of jumps of a
function by means of its truncated Fourier series arises naturally from the
attempt to overcome the Gibbs phenomenon which describes the characteristic
oscillatory behaviour of the Fourier partial sums of a piecewise smooth
function in the neighbourhood of a point of discontinuity. The identity
determining the jumps of a function of bounded variation ($BV$) using
partial sums of its differentiated Fourier series has been known for a long
time. The equation
\begin{equation}
\underset{n\rightarrow \infty }{\lim }\frac{S_{n}^{^{\prime }}(f,x)}{n}=%
\frac{1}{\pi }\left[ f\left( x+0\right) -f\left( x-0\right) \right] ,
\label{eq0}
\end{equation}%
where $S_{n}^{^{\prime }}(f,x)$ denotes the $n$th partial sum of the
differentiated Fourier series of a function $f$ at a point $x$, was proved
by L. Fej\'{e}r \cite{FE} for $f$ satisfying the so-called
Dirichlet-condition, by P. Csillag \cite{CS} for functions of bounded
variation and by B. I. Golubov \cite[Theorem 1, p. 20]{GO} for functions in $%
V_{p}$, $1\leq p<\infty $, of Wiener's bounded variation. M. Avdispahi\'{c}
\cite[Theorem 1, p. 268]{AV3} has shown that equation \eqref{eq0} holds for
any function $f\in HBV$ and that $HBV$ is the limiting case in the scale of $%
\Lambda BV$ spaces for validity of \eqref{eq0}. The corresponding formula
involving the partial sums of the conjugate Fourier series of $f\in HBV$ is
also derived there. A number of results from \cite{AV1, AV2, AV3, AV4}
related to the classes $V_{\phi }$, $\Lambda BV$ and $V[\nu ]$ were later
rediscovered and differently proved in \cite{KV1}. G. Kvernadze \cite{KV1}
extended \cite[Theorem 1' (1)]{AV3} to the setting of the generalized
Fourier-Jacobi series.

Special formulae which determine the jumps of a $2\pi -$periodic function in
$V_{p}$, $1\leq p<2$, with a finite number of jump discontinuities, by means
of the tails of its integrated Fourier series have been established by G.
Kvernadze, T. Hagstrom and H. Shapiro in \cite{KHS}.

This paper consists of two main parts. In the first part, we generalize a
result of \cite{KHS} to the whole class of functions of harmonic bounded
variation and without finiteness assumption on the number of
discontinuities. New results on determination of jump discontinuities by
means of integrated Fourier-Chebyshev series in case of $HBV$ and $%
V_{2}$ spaces are presented in the second part.

\section{Jump of a $HBV$\ function and integrated Fourier series}

\subsection{Classes of functions of bounded variation}

\ \ The classes of functions of bounded variation of higher orders were
firstly introduced by N. Wiener \cite{WI}. A function $f$ is said to be of%
\textit{\ bounded }$p-$\textit{variation} on $\left[ 0,2\pi \right] $, $%
p\geq 1$, and belongs to the class $V_{p}$ if%
\begin{equation*}
V_{p}(f)=\sup \left\{ \underset{i}{\sum }\left\vert f(I_{i})\right\vert
^{p}\right\} ^{1/p}<\infty \text{,}
\end{equation*}%
where the supremum is taken over all finite collections of nonoverlapping
subintervals $I_{i}$ of $\left[ 0,2\pi \right] $. The quantity $V_{p}(f)$ is
called the\textit{\ }$p-$\textit{variation of }$f$\textit{\ }on $\left[
0,2\pi \right] $.

This concept has been generalized by L. C. Young \cite{YO}. Let $\phi $ be a
continuous function defined on $\left[ 0,\infty \right) $ and strictly
increasing from $0$ to $\infty $. A function $f$ is said to be of\textit{\
bounded }$\phi -$\textit{variation }on $\left[ 0,2\pi \right] $ and belongs
to the class $V_{\phi }$ if%
\begin{equation*}
V_{\phi }(f)=\sup \left\{ \underset{i}{\sum }\phi \left( \left\vert
f(I_{i})\right\vert \right) \right\} <\infty \text{,}
\end{equation*}%
where the supremum is taken over all finite collections of nonoverlapping
subintervals $I_{i}$ of $\left[ 0,2\pi \right] $. The quantity $V_{\phi }(f)$
is called the\textit{\ }$\phi -$\textit{variation of }$f$\textit{\ }on $%
\left[ 0,2\pi \right] $.

By taking $\phi \left( u\right) =u$ we get Jordan's class $BV$, while $\phi
\left( u\right) =u^{p}$ gives Wiener's class $V_{p}$.

Another type of generalization of the class $BV$, influenced by a joint work
with C. Goffman on everywhere convergence of Fourier series, was introduced
by D. Waterman in \cite{WA}. Let $\Lambda =\left\{ \lambda _{n}\right\} $ be
a nondecreasing sequence of positive numbers tending to infinity, such that $%
\sum 1/\lambda _{n}$ diverges. A function $f$ is said to be of\textit{\
bounded }$\Lambda -$\textit{variation} on $\left[ 0,2\pi \right] $ and
belongs to the class $\Lambda BV$ if%
\begin{equation*}
V_{\Lambda }(f)=\sup \left\{ \underset{i}{\sum }\left\vert
f(I_{i})\right\vert /\lambda _{i}\right\} <\infty \text{,}
\end{equation*}%
where the supremum is taken over all finite collections of nonoverlapping
subintervals $I_{i}$ of $\left[ 0,2\pi \right] $. The quantity $V_{\Lambda
}(f)$ is called the\textit{\ }$\Lambda -$\textit{variation of }$f$\textit{\ }%
on $\left[ 0,2\pi \right] $. In the case when $\Lambda =\left\{ n\right\} $,
the sequence of positive integers, the function $f$ is said to be of \textit{%
harmonic bounded variation} and the corresponding class is denoted by $HBV$.

By $W$ we denote the class of \textit{regulated functions}, i.e. functions
possesing the one-sided limits at each point. $W$ is the union of all $%
\Lambda BV$ spaces \cite{PE}.

Z. Chanturiya \cite{CH} gave another interesting generalization of bounded
variation using the modulus of variation. The \textit{modulus of variation}
of a bounded function $f$ is the function $\nu _{f}$ whose domain is the set
of positive integers, given by%
\begin{equation*}
\nu _{f}\left( n\right) =\underset{\Pi _{n}}{\sup }\left\{ \underset{k=1}{%
\overset{n}{\sum }}\left\vert f(I_{k})\right\vert \right\} \text{,}
\end{equation*}%
where $\Pi _{n}=\left\{ I_{k}:k=1,\ldots ,n\right\} $ is an arbitrary finite
collection of $n$ nonoverlapping subintervals of $\left[ 0,2\pi \right] $.
The modulus of variation of any bounded function is nondecreasing and
concave. Given a function $\nu $ whose domain is the set of positive
integers with such properties, then by $V\left[ \nu \right] $ one denotes
the class of functions $f$ for which $\nu _{f}\left( n\right) =O\left( \nu
\left( n\right) \right) $ as $n\rightarrow \infty $. We note that $V_{\phi
}\subseteq V\left[ n\phi ^{-1}\left( 1/n\right) \right] $ and $W=\left\{
f:\nu _{f}\left( n\right) =o\left( n\right) \right\} $ \cite{CH}.

There exist the following inclusion relations between Wiener's, Waterman's
and Chanturiya's classes.

\begin{thmx}[cf. Theorem 4.4. in \protect\cite{AV1}]
\label{AVth}
\begin{equation*}
\left\{ n^{\alpha }\right\} BV\subset V_{\frac{1}{1-\alpha }}\subset V\left[
n^{\alpha }\right] \subset \left\{ n^{\beta }\right\} BV\text{,}
\end{equation*}%
for $0<\alpha <\beta <1$.
\end{thmx}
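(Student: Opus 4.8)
The plan is to prove the three inclusions one at a time, in each case reducing the matter to an estimate for a single nonincreasing sequence. Fix a finite collection of nonoverlapping subintervals $I_1,\dots,I_N$ of $[0,2\pi]$ and put $a_i=|f(I_i)|$. Since, for a fixed collection, the $\Lambda$-variation (with $\lambda_i=i^\alpha$ or $i^\beta$) is maximized by listing the intervals so that the increments are nonincreasing — this is just the rearrangement inequality against the decreasing weights $1/\lambda_i$ — while the modulus of variation is insensitive to order, I may throughout assume $a_1\ge a_2\ge\cdots\ge a_N$. I will also write $p=\tfrac1{1-\alpha}$, so that $(1-\alpha)p=1$ and $\alpha p=p-1$.

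For $\{n^\alpha\}BV\subset V_p$ — the step I expect to require the most care — I would first extract a pointwise decay rate: since $a_i\ge a_n$ and $i^{-\alpha}\ge n^{-\alpha}$ for $i\le n$,
\[
a_n\,n^{1-\alpha}=a_n\sum_{i=1}^n n^{-\alpha}\le\sum_{i=1}^n a_i\,i^{-\alpha}\le V_{\{n^\alpha\}}(f)=:V,
\]
hence $a_n\le V\,n^{-(1-\alpha)}=V\,n^{-1/p}$. Then, using $(1-\alpha)\alpha p=\alpha$ and $p=1+\alpha p$,
\[
\sum_{n=1}^N a_n^{\,p}=\sum_{n=1}^N a_n\cdot a_n^{\,\alpha p}\le V^{\alpha p}\sum_{n=1}^N a_n\,n^{-\alpha}\le V^{\alpha p}\cdot V=V^{\,p},
\]
so $\bigl(\sum a_n^{\,p}\bigr)^{1/p}\le V$, and taking the supremum over collections yields $V_p(f)\le V_{\{n^\alpha\}}(f)<\infty$.

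The remaining two inclusions are short. For $V_p\subset V[n^\alpha]$ I would apply H\"older's inequality with exponents $p$ and $p'$, $1/p'=1-1/p=\alpha$, to get $\sum_{i=1}^n a_i\le\bigl(\sum_{i=1}^n a_i^{\,p}\bigr)^{1/p}n^{1/p'}\le V_p(f)\,n^{\alpha}$, whence $\nu_f(n)\le V_p(f)\,n^{\alpha}$; this is also the instance $\phi(u)=u^p$ of the inclusion $V_\phi\subseteq V[n\phi^{-1}(1/n)]$ recalled above. For $V[n^\alpha]\subset\{n^\beta\}BV$ I would set $A_n=\sum_{i=1}^n a_i\le\nu_f(n)\le C\,n^{\alpha}$ and sum by parts:
\[
\sum_{i=1}^N\frac{a_i}{i^\beta}=\frac{A_N}{N^\beta}+\sum_{i=1}^{N-1}A_i\bigl(i^{-\beta}-(i+1)^{-\beta}\bigr)\le C\,N^{\alpha-\beta}+C\beta\sum_{i=1}^{N-1}i^{\alpha-\beta-1}.
\]
As $\alpha<\beta$, one has $\alpha-\beta<0$ and $\alpha-\beta-1<-1$, so the right-hand side is at most $C\bigl(1+\beta\,\zeta(1+\beta-\alpha)\bigr)$, uniformly in $N$ and in the chosen collection; hence $V_{\{n^\beta\}}(f)<\infty$.

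Finally, to see that all inclusions are strict I would produce separating functions. Invoking Chanturiya's characterization of the sequences realizable as a modulus of variation, I would take $f$ with $\nu_f(n)\asymp n^\gamma$ for some $\gamma\in(\alpha,\beta)$: by the third step such an $f$ lies in $\{n^\beta\}BV$, while $\nu_f(n)\ne O(n^\alpha)$ shows $f\notin V[n^\alpha]$. Analogous step-function (or lacunary-type) constructions, with jumps tuned to the borderline decay $n^{-1/p}$, separate $V_p$ from $V[n^\alpha]$ and $\{n^\alpha\}BV$ from $V_p$. The main obstacle is the first inclusion: everything hinges on first converting the $\Lambda$-variation bound into the pointwise decay $a_n\lesssim n^{-1/p}$ and then noticing that the identity $p=1+\alpha p$ lets one absorb the extra power of $a_n$ back into the original sum; the other two inclusions are routine applications of the H\"older and Abel inequalities, and strictness is bookkeeping with explicit examples.
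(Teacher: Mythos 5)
The paper does not prove this statement: it is quoted as a known result (Theorem~A, attributed to Theorem~4.4 of \cite{AV1}), so there is no internal proof to compare against and your argument has to stand on its own. It essentially does. The reduction to nonincreasing increments is legitimate in both directions (the $\Lambda$-variation of a fixed collection is maximized by the decreasing arrangement, while $\sum a_i^p$ and $\nu_f$ are order-independent), and each inclusion is handled by the standard device: the pointwise decay $a_n\le V n^{-(1-\alpha)}$ extracted from $\sum_{i\le n}a_i i^{-\alpha}\le V$, combined with the exponent identity $p-1=\alpha p$, gives $\sum a_n^p\le V^p$; H\"older with conjugate exponent $1/p'=\alpha$ gives $\nu_f(n)\le V_p(f)\,n^\alpha$; and Abel summation against $i^{-\beta}$ with $A_i\le Ci^\alpha$ converges because $1+\beta-\alpha>1$. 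These are exactly the arguments one finds in \cite{AV1} and in the Perlman--Waterman circle of ideas, so your route is the expected one rather than a genuinely different one.

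The only incomplete piece is strictness. The theorem asserts proper inclusions, and you fully justify only the properness of $V[n^\alpha]\subset\{n^\beta\}BV$ (via Chanturiya's realization theorem and $\gamma\in(\alpha,\beta)$). The separations $V_{1/(1-\alpha)}\setminus\{n^\alpha\}BV$ and $V[n^\alpha]\setminus V_{1/(1-\alpha)}$ are asserted, not exhibited; they do exist and are routine --- a regulated function whose decreasing jump sequence is $a_n=n^{-(1-\alpha)}$ lies in $V[n^\alpha]$ but not in $V_{1/(1-\alpha)}$ since $\sum a_n^p=\sum n^{-1}$ diverges, and $a_n=n^{-(1-\alpha)}(\log n)^{-\gamma}$ with $1-\alpha<\gamma\le 1$ lies in $V_{1/(1-\alpha)}$ but not in $\{n^\alpha\}BV$ --- but as written this is a gap if properness is part of the claim. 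If ``$\subset$'' is read as mere containment, your proof is complete and correct.
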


\subsection{Ces\`aro summability and differentiated Fourier series}

\ \ As well known, a sequence $\left\{s_{n}\right\} $ is \textit{Ces\`aro }or%
\textit{\ }$\left( C,1\right) $\textit{\ summable }to $s$ if the sequence $%
\left\{ \sigma _{n}\right\} $ of its arithmetical means converges to $s$,
i.e.
\begin{equation*}
\sigma _{n}=\frac{s_{0}+s_{1}+\cdots +s_{n}}{n+1}\rightarrow s, \text{ \ \ }%
n\rightarrow \infty \text{.}
\end{equation*}

Analogously, the sequence $\left\{s_{n}\right\} $ is $(C,\alpha )$, $\alpha
>-1$, summable to $s$, if the sequence
\begin{equation*}
\sigma _{n}^{(\alpha )}=\frac{1}{\binom{n+\alpha }{n}}\sum_{i=0}^{n}\binom{%
n-i+\alpha -1}{n-i}s_{i}
\end{equation*}%
converges to $s$.

It is obvious that Fej\'{e}r's identity \eqref{eq0} for determination of the
jump of a function $f\in BV$ is equivalent to Ces\`aro summability of the
sequence $\left\{ kb_{k}\cos kx-ka_{k}\sin kx\right\} $, where $%
a_{k}=a_{k}\left( f\right) $ and $b_{k}=b_{k}\left( f\right) $ are the $k$th
cosine and sine coefficient of the Fourier series of the function $f$,
respectively. There exist numerous generalizations of Fej\'{e}r's theorem to
more general summability methods. We recall the relationship between the
order of Ces\`aro summability of the sequence $\left\{ kb_{k}\cos
kx-ka_{k}\sin kx\right\} $ and the "order of variation" of a function $f$.

\begin{thmx}[\protect\cite{AV3}, \protect\cite{AV4}, \protect\cite{AV5}]
The sequence \bigskip $\left\{ kb_{k}\cos kx-ka_{k}\sin kx\right\} $ of the
terms of the differented Fourier series of a function $f$ of generalized
bounded variation is $(C,\alpha )$ summable to $\frac{1}{\pi }\left[ f\left(
x+0\right) -f\left( x-0\right) \right]$ at every point $x$ for

\begin{enumerate}
\item $\alpha >0$, if $f\in BV$,

\item $\alpha >1-\frac{1}{p}$, if $f\in V_{p}$, $1<p<\infty $,

\item $\alpha >\beta $, if $f\in V\left[ n^{\beta }\right] $, $0<\beta <1$,

\item $\alpha =1$, if $f\in HBV$,

\item $\alpha >1$, if $f\in W$.
\end{enumerate}
\end{thmx}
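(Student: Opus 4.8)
The plan is to turn the Ces\`aro means of the differentiated series into a single oscillatory integral, to analyse the resulting kernel, and then to match its size against the oscillation of $f$ near $x$; the five cases differ only in this last step, and the case $\alpha=1$, $f\in HBV$, is the hard one around which everything else is organised. Inserting the Fourier coefficients $a_k,b_k$ of $f$ and substituting $t=x+u$ gives, for $k\ge 1$,
\begin{equation*}
kb_k\cos kx-ka_k\sin kx=\frac{k}{\pi}\int_{-\pi}^{\pi}f(x+u)\sin ku\,du=\frac{k}{\pi}\int_{0}^{\pi}\psi_x(u)\sin ku\,du,\qquad\psi_x(u):=f(x+u)-f(x-u),
\end{equation*}
so that, with $A_n^{\alpha}=\binom{n+\alpha}{n}$, the $(C,\alpha)$ mean $\sigma_n^{(\alpha)}(x)$ of the sequence $\{kb_k\cos kx-ka_k\sin kx\}$ equals
\begin{equation*}
\sigma_n^{(\alpha)}(x)=\frac{1}{\pi}\int_{0}^{\pi}\psi_x(u)\,K_n^{\alpha}(u)\,du,\qquad K_n^{\alpha}(u)=\frac{1}{A_n^{\alpha}}\sum_{k=1}^{n}A_{n-k}^{\alpha-1}\,k\sin ku .
\end{equation*}
Writing $\ell:=f(x+0)-f(x-0)=\psi_x(0+)$, it suffices to prove $\int_0^{\pi}K_n^{\alpha}(u)\,du\to1$ and $\int_0^{\pi}\big(\psi_x(u)-\ell\big)K_n^{\alpha}(u)\,du\to0$. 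For $\alpha=1$ one has $\sigma_n^{(1)}(x)=S_n'(f,x)/(n+1)$, so assertion (4) is exactly \eqref{eq0} for $f\in HBV$, established in \cite{AV3}; since \cite{AV3} also shows $HBV$ to be the largest Waterman class on which \eqref{eq0} holds, no $\alpha<1$ can be admitted for all $f\in HBV$.

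\emph{The kernel.} The primitive $\widetilde K_n^{\alpha}(u)=\int_0^uK_n^{\alpha}(t)\,dt=\frac{1}{A_n^{\alpha}}\sum_{k=1}^{n}A_{n-k}^{\alpha-1}(1-\cos ku)$ is a Fej\'er--type kernel: from $\sum_{k=0}^{n}A_{n-k}^{\alpha-1}=A_n^{\alpha}$, $A_n^{\alpha-1}/A_n^{\alpha}=O(1/n)$ and a summation by parts one gets, for every $\alpha>0$, that $\widetilde K_n^{\alpha}$ is bounded on $[0,\pi]$ uniformly in $n$, $\widetilde K_n^{\alpha}(0)=0$, $\widetilde K_n^{\alpha}(u)\to1$ for each fixed $u\in(0,\pi]$, and in particular $\int_0^{\pi}K_n^{\alpha}=\widetilde K_n^{\alpha}(\pi)\to1$, which settles the first claim. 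For $K_n^{\alpha}$ itself the classical estimates for Ces\`aro kernels of differentiated Fourier series give $|K_n^{\alpha}(u)|\le C_{\alpha}n^{2}u$ on $(0,1/n]$ and, for $u$ bounded away from $0$, a bound of order $n^{\max(0,\,1-\alpha)}$ times an $n$--oscillatory factor decaying in $u$. Hence, when paired with $\psi_x-\ell$ over a set separated from $0$, the kernel produces a Riemann--Lebesgue gain but also a factor $n^{1-\alpha}$ when $\alpha<1$; the balance between the two is what generates the thresholds on $\alpha$.

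\emph{The contribution near $x$.} If $f\in BV$, then $\psi_x-\ell\in BV$ with $(\psi_x-\ell)(0+)=0$; writing $\int_0^{\pi}(\psi_x-\ell)K_n^{\alpha}=\int_0^{\pi}(\psi_x-\ell)\,d\widetilde K_n^{\alpha}$ and integrating by parts, the boundary term and the Stieltjes term $-\int_0^{\pi}\widetilde K_n^{\alpha}\,d\psi_x$ cancel in the limit by dominated convergence, using only the uniform bound on $\widetilde K_n^{\alpha}$ valid for every $\alpha>0$ --- this is case (1). For $f\in W$, $\psi_x$ is merely regulated, but taking $\alpha>1$ removes the factor $n^{1-\alpha}$ and makes the tail of $K_n^{\alpha}$ genuinely small; splitting at a fixed small $\delta$, near $0$ one combines $|\psi_x(u)-\ell|\le\eta(u)\to0$ with the localized kernel bounds, and away from $0$ a Riemann--Lebesgue argument closes the estimate --- case (5). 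For the intermediate classes one decomposes $(0,\delta]$ into the blocks $I_j=[\pi/(j+1),\pi/j]$, estimates the blocks with $j\ge n$ crudely (using $\psi_x(u)-\ell\to0$) and the finitely many blocks with $j<n$ by Abel's transformation against $\widetilde K_n^{\alpha}$, reducing matters to a sum $\sum_j\omega_j\,|\widetilde K_n^{\alpha}(\pi/j)|$ in the oscillations $\omega_j$ of $\psi_x$ on $I_j$; balancing the growth of $\sum_{j\le N}\omega_j$ in $N$ --- of order $N^{1-1/p}$ by H\"older for $f\in V_p$, and of order $N^{\beta}$ via the modulus of variation for $f\in V[n^{\beta}]$ --- against the factor $n^{1-\alpha}$ above yields convergence to $0$ precisely for $\alpha>1-1/p$, resp.\ $\alpha>\beta$, which gives (2) and (3); alternatively, (2) follows from (3) since $V_p\subset V[n^{1-1/p}]$ by Theorem~\ref{AVth}.

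\emph{The main obstacle.} These block estimates are the genuine analytic content, and they become critical exactly at the $HBV$ scale $\alpha=1$: there $\sum_{j\le N}\omega_j$ is only $o(N)$, the elementary inequalities deliver boundedness instead of convergence to $0$, and one must instead exploit the full force of $f\in HBV$ --- namely the convergence (not mere boundedness) of the harmonic--variation sum $\sum_j\omega_j/j$ --- together with the precise closed form of the conjugate--Dirichlet part of $K_n^{1}$, whose singular $\tfrac12\cot(u/2)$ contribution is integrable against $\psi_x-\ell$ only thanks to that extra oscillation. This is precisely the mechanism behind \eqref{eq0}, and it is why I would treat case (4) (equivalently \eqref{eq0} for $HBV$, from \cite{AV3}) as established separately and use it as the hub: cases (1)--(3) are then refinements available on the smaller classes $BV$, $V_p$, $V[n^{\beta}]$, while (5) is the crude high--smoothing regime forced by enlarging the class to all of $W$.
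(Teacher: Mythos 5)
First, a point of reference: the paper does not prove this statement at all --- it is quoted as background (Theorem 2.B) with attributions to \cite{AV3}, \cite{AV4}, \cite{AV5}, and the only part the present paper ever uses is case (4), which is exactly equation \eqref{eq0} for $HBV$ from \cite{AV3}. So your attempt can only be judged on its own merits. Much of it is sound: the kernel representation $\sigma_n^{(\alpha)}(x)=\frac{1}{\pi}\int_0^{\pi}\psi_x(u)K_n^{\alpha}(u)\,du$, the reduction to $\int_0^{\pi}(\psi_x-\ell)K_n^{\alpha}\to 0$ via $\widetilde K_n^{\alpha}(\pi)\to 1$, the uniform bound $0\le\widetilde K_n^{\alpha}\le 2$ (which needs only $A_m^{\alpha-1}>0$ for $\alpha>0$, not summation by parts), the Stieltjes integration-by-parts argument for case (1), the identification of case (4) with \eqref{eq0}, and the deduction of (2) from (3) via $V_p\subset V[n^{1-1/p}]$ (Theorem~\ref{AVth}) are all correct.

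The genuine gap is in cases (2) and (3), where the thresholds $\alpha>1-1/p$ and $\alpha>\beta$ are supposed to emerge. Your stated reduction --- Abel transformation over the blocks $I_j=[\pi/(j+1),\pi/j]$ leading to $\sum_j\omega_j\,|\widetilde K_n^{\alpha}(\pi/j)|$, to be ``balanced against the factor $n^{1-\alpha}$'' --- cannot close as written. Since $\widetilde K_n^{\alpha}$ is uniformly bounded and tends to $1$ (it does not decay), the sum $\sum_{j\le n}\omega_j|\widetilde K_n^{\alpha}(\pi/j)|$ is of order $\sum_{j\le n}\omega_j\sim n^{1-1/p}$, and once you have paired against $\widetilde K_n^{\alpha}$ there is no residual factor $n^{1-\alpha}$ left to multiply by; conversely, if you use the raw pointwise bound $|K_n^{\alpha}(u)|\lesssim n^{1-\alpha}u^{-\alpha}+ (nu^2)^{-1}$ and integrate absolute values over the blocks, H\"older again delivers only $O(n^{1-1/p})$ or $O(1)$, never $o(1)$. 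The mechanism that actually produces the stated thresholds is the quantitative rate $|\widetilde K_n^{\alpha}(u)-1|=O\left((nu)^{-\alpha}\right)+O\left((nu)^{-1}\right)$, used after an Abel summation over the blocks $[k\pi/n,(k+1)\pi/n]$ natural to the kernel (not the harmonic blocks $I_j$) and against $\widetilde K_n^{\alpha}-1$ rather than $\widetilde K_n^{\alpha}$: one then faces $\sum_{k\le n}|\Delta_k\psi_x|\,k^{-\alpha}$, which H\"older (for $V_p$) or Abel summation against the modulus of variation (for $V[n^{\beta}]$) makes finite precisely when $\alpha q>1$, i.e.\ $\alpha>1-1/p$, resp.\ when $\sum_k k^{\beta-\alpha-1}<\infty$, i.e.\ $\alpha>\beta$. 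This rate for $\widetilde K_n^{\alpha}-1$, and the vanishing of $V_p(\psi_x;(0,\delta])$ as $\delta\to 0$ needed to make the near-origin block sum small rather than merely bounded, are the missing analytic content; the same imprecision affects your case (5), where ``$\eta(u)\to0$ plus localized kernel bounds'' needs the explicit bound $|K_n^{\alpha}(u)|\lesssim (nu^2)^{-1}+n^{1-\alpha}u^{-\alpha}$ on $[1/n,\delta]$ to yield $\limsup_n\le C\eta(\delta)$. With those kernel estimates supplied and the block structure corrected, your outline does become a proof; as it stands, the crucial quantitative step is asserted in a form that is false.
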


\subsection{Jump of a function and integrated Fourier series}

\ \ A method of determining jumps of a $2\pi -$periodic function by means of
the tails of its integrated Fourier series was introduced in \cite{KHS},
where special formulae to determine the jumps of a $2\pi -$periodic function
of $V_{p}$, $1\leq p<2$, class, with a finite number of jump
discontinuities, were derived.

For any function $f$, integrable on $\left[ -\pi ,\pi \right] $, we define $%
f^{(-r)}$, $r\in
\mathbb{N}
_{0}$, as%
\begin{equation*}
f^{\left( -r-1\right) }\equiv \int f^{\left( -r\right) }\text{,}
\end{equation*}%
where $f^{\left( 0\right) }\equiv f$, and the constants of integration are
successively determined by the condition%
\begin{equation*}
\int_{-\pi }^{\pi }f^{\left( -r\right) }(t)dt=0\text{, }r\in
\mathbb{N}
_{0}\text{.}
\end{equation*}

We generalize a result of \cite[Theorem 4., p. 32]{KHS} to the whole class
of $HBV$ functions and without finiteness assumption on the number of
discontinuities. The result is presented in the following theorem.

\begin{thm}
\label{aaa}\mbox{}\\*[0pt]
\begin{enumerate}
\item[a)] Let $g\in HBV$ and $r=0,$ $1,$ $2,$ \ldots . Then, for any point $%
x_{0}$ we have%
\begin{equation}
\underset{n\rightarrow \infty }{\lim }n^{2r+1}R_{n}^{\left( -2r-1\right)
}\left( g,x_{0}\right) =\frac{\left( -1\right) ^{r+1}}{\left( 2r+1\right)
\pi }\left[ g\left( x_{0}+0\right) -g\left( x_{0}-0\right) \right] ,
\label{1}
\end{equation}%
where $R_{n}\left( g,x\right) $ denotes the $n$th order tail of the Fourier
series of $g$, i.e.%
\begin{equation*}
R_{n}\left( g,x\right) =\underset{k=n}{\overset{\infty }{\sum }}\left(
a_{k}\left( g\right) \cos kx+b_{k}\left( g\right) \sin kx\right) \text{.}
\end{equation*}

\item[b)] If $\Lambda $ is such that $\Lambda BV\supsetneqq HBV$, the
assertion a) does not hold for $\Lambda BV\setminus HBV$.
\end{enumerate}
\end{thm}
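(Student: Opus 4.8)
The plan is to translate every statement about the tails of integrated Fourier series into a statement about the differentiated series $\sum_{k}A_k(x)$, $A_k(x):=kb_k\cos kx-ka_k\sin kx$, and then to invoke the Ces\`aro theory recalled above. Writing $R_n(g,x)=\sum_{k\ge n}(a_k\cos kx+b_k\sin kx)$ — a series with vanishing mean once $n\ge 1$ — and integrating it term by term $2r+1$ times with the prescribed normalisation of the constants (no secular term arises, precisely because the mean vanishes and each integral of a pure harmonic again has zero mean), one obtains
\begin{equation*}
R_n^{(-2r-1)}(g,x)=(-1)^{r+1}\sum_{k=n}^{\infty}\frac{A_k(x)}{k^{2r+2}}.
\end{equation*}
Hence \eqref{1} is equivalent to the assertion that $n^{2r+1}\sum_{k\ge n}A_k(x_0)/k^{2r+2}\to\frac{1}{(2r+1)\pi}\bigl[g(x_0+0)-g(x_0-0)\bigr]$, and the whole theorem reduces to a transfer between this quantity and the Ces\`aro behaviour of $\{A_k(x_0)\}$.

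For part a), put $J:=g(x_0+0)-g(x_0-0)$ and $S_N:=\sum_{k=1}^{N}A_k(x_0)$. By case (4) of the quoted theorem on Ces\`aro summability of the differentiated series, $g\in HBV$ makes $\{A_k(x_0)\}$ $(C,1)$-summable to $J/\pi$ at every $x_0$; equivalently $S_N=\tfrac{J}{\pi}N+o(N)$, so in particular $S_N=O(N)$ and $S_N/N^{2r+2}\to 0$. Abel summation then gives
\begin{equation*}
\sum_{k=n}^{\infty}\frac{A_k(x_0)}{k^{2r+2}}=-\frac{S_{n-1}}{n^{2r+2}}+\sum_{k=n}^{\infty}S_k\Bigl(\frac{1}{k^{2r+2}}-\frac{1}{(k+1)^{2r+2}}\Bigr).
\end{equation*}
I would multiply by $n^{2r+1}$: the first term tends to $-J/\pi$; in the second I would substitute $S_k=\tfrac{J}{\pi}k+o(k)$ together with $k^{-(2r+2)}-(k+1)^{-(2r+2)}=(2r+2)k^{-(2r+3)}+O(k^{-(2r+4)})$ and use $n^{2r+1}\sum_{k\ge n}k^{-(2r+2)}\to\tfrac{1}{2r+1}$, obtaining $\tfrac{2r+2}{2r+1}\cdot\tfrac{J}{\pi}$. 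Adding the two pieces yields $\tfrac{1}{2r+1}\cdot\tfrac{J}{\pi}$, and the factor $(-1)^{r+1}$ turns this into the right-hand side of \eqref{1}. The routine-but-necessary check is that the $o(\cdot)$ remainders contribute $o(n^{-(2r+1)})$, which follows from $\sum_{k=1}^{N}o(1)=o(N)$ and the absolute convergence of $\sum_k k\cdot O(k^{-(2r+4)})$.

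For part b) I would exploit that the identity above is reversible. Given $\Lambda BV\supsetneqq HBV$, the sharpness part of the result of Avdispahi\'c recalled in the introduction provides a $g\in\Lambda BV\setminus HBV$ and a point $x_0$ at which \eqref{eq0} fails, i.e.\ $\{A_k(g,x_0)\}$ is not $(C,1)$-summable to $\tfrac1\pi[g(x_0+0)-g(x_0-0)]$. If \eqref{1} held for this $g$ with $r=0$, then $T_n:=\sum_{k\ge n}A_k(x_0)/k^2=-R_n^{(-1)}(g,x_0)$ would satisfy $nT_n\to L:=\tfrac1\pi[g(x_0+0)-g(x_0-0)]$; writing $A_k(x_0)=k^2(T_k-T_{k+1})$ and summing by parts,
\begin{equation*}
S_N=\sum_{k=1}^{N}A_k(x_0)=T_1+\sum_{k=2}^{N}(2k-1)T_k-N^2T_{N+1},
\end{equation*}
and inserting $T_k=\tfrac{L}{k}+o(1/k)$ yields $\sum_{k=2}^{N}(2k-1)T_k=2LN+o(N)$ and $N^2T_{N+1}=LN+o(N)$, hence $S_N/N\to L$; that is, $\{A_k(x_0)\}$ is $(C,1)$-summable to $L$, contradicting the choice of $g$. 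Thus \eqref{1} already fails for $g$ at $x_0$ when $r=0$, so assertion a) cannot hold on $\Lambda BV\setminus HBV$.

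I expect the main obstacle to be conceptual rather than computational: one must be sure that \emph{both} directions of the passage between $\{A_k(x_0)\}$ and the integrated tails are genuine equivalences effected by nothing more than Abel summation — so that no hidden Tauberian hypothesis is smuggled in — and that the established $HBV$ theory for \eqref{eq0}, together with its sharpness, then transfers verbatim, the finiteness assumption on the discontinuities in \cite{KHS} simply dropping out. The remaining care is bookkeeping: the signs and normalising constants accumulated over the $2r+1$ iterated integrations, and a uniform control of the $o(\cdot)$ terms in the summation-by-parts estimates so that they still vanish after multiplication by $n^{2r+1}$.
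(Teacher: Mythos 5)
Your proof is correct and follows essentially the same route as the paper: both parts reduce \eqref{1} to Avdispahi\'c's $HBV$ theorem for the differentiated Fourier series and transfer back and forth by Abel summation --- in a) to evaluate $\lim_n n^{2r+1}\sum_{k\ge n}A_k(x_0)/k^{2r+2}$, and in b) to deduce $(C,1)$-summability of $\{A_k(x_0)\}$ from \eqref{1} and contradict the sharpness of $HBV$ for \eqref{eq0}. The only substantive difference is that you carry out the converse Abel summation in b) only for $r=0$, which suffices to refute assertion a), whereas the paper derives the contradiction for every $r$.
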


\begin{proof}
\begin{enumerate}
\item[a)]
\end{enumerate}
Let $g\in HBV$ and $S_{n}^{{^{\prime }}}\left( g,x_{0}\right) =\underset{k=1}%
{\overset{n}{\sum }}\left( -ka_{k}\left( g\right) \sin kx_{0}+kb_{k}\left(
g\right) \cos kx_{0}\right) $. For brevity, we denote by $c\equiv c\left(
g,x_{0}\right) =\frac{1}{\pi }\left[ g\left( x_{0}+0\right) -g\left(
x_{0}-0\right) \right] $ the jump of the function $g$ at $x_{0}$ and put $%
A_{k}\equiv A_{k}\left( g,x_{0}\right) =a_{k}\left( g\right) \sin
kx_{0}-b_{k}\left( g\right) \cos kx_{0}$. By \cite[Theorem 1, p. 268]{AV3},
one has
\begin{equation*}
\underset{n\rightarrow \infty }{\lim }\frac{S_{n}^{{^{\prime }}}\left(
g,x_{0}\right) }{n}=\frac{1}{\pi }\left[ g\left( x_{0}+0\right) -g\left(
x_{0}-0\right) \right] \text{,}
\end{equation*}%
or equivalently%
\begin{equation}
s_{n}\equiv s_{n}\left( g,x_{0}\right) \equiv c+\frac{1}{n}\underset{k=1}{%
\overset{n}{\sum }}kA_{k}=o\left( 1\right) ,\text{ }n\rightarrow \infty
\text{.}  \label{2}
\end{equation}%
Multiplying (\ref{2}) by $n$ and rearranging the terms, we get%
\begin{equation}
ns_{n}=\underset{k=1}{\overset{n}{\sum }}\left( kA_{k}+c\right) =o\left(
n\right), \ n\rightarrow \infty .  \label{3}
\end{equation}%
Obviously,
\begin{equation}
ns_{n}-\left( n-1\right) s_{n-1}=nA_{n}+c \label{4}
\end{equation}%
and $R_{n}^{\left( -2r-1\right) }\left( g,x_{0}\right) =\underset{k=n}{%
\overset{\infty }{\sum }}\frac{\left( -1\right) ^{r}\left( a_{k}\left(
g\right) \sin kx_{0}-b_{k}\left( g\right) \cos kx_{0}\right) }{k^{2r+1}}%
=\left( -1\right) ^{r}\underset{k=n}{\overset{\infty }{\sum }}\frac{A_{k}}{%
k^{2r+1}}$. Now, it is enough to prove that
\begin{equation}
n^{2r+1}\underset{k=n}{\overset{\infty }{\sum }}\frac{A_{k}}{k^{2r+1}}%
\rightarrow -\frac{c}{2r+1}, \text{ \ \ }n\rightarrow \infty \text{.}  \label{5}
\end{equation}%
Using (\ref{2}), (\ref{3}), (\ref{4}), Abel's partial summation formula and
the fact that $\underset{n\rightarrow \infty }{\lim }n^{2r+1}\underset{k=n}{%
\overset{\infty }{\sum }}\frac{1}{k^{2r+2}}=\frac{1}{2r+1}$, we get
\begin{eqnarray*}
n^{2r+1}\underset{k=n}{\overset{\infty }{\sum }}\frac{A_{k}}{k^{2r+1}}
&=&n^{2r+1}\underset{k=n}{\overset{\infty }{\sum }}\frac{kA_{k}}{k^{2r+2}}%
=n^{2r+1}\underset{k=n}{\overset{\infty }{\sum }}\frac{ks_{k}-\left(
k-1\right) s_{k-1}-c}{k^{2r+2}} \\
&=&n^{2r+1}\underset{k=n}{\overset{\infty }{\sum }}\frac{ks_{k}-\left(
k-1\right) s_{k-1}}{k^{2r+2}}-cn^{2r+1}\underset{k=n}{\overset{\infty }{%
\sum }}\frac{1}{k^{2r+2}} \\
&=&n^{2r+1}\left\{ -\frac{\left( n-1\right) s_{n-1}}{n^{2r+2}}+\underset{k=n}%
{\overset{\infty }{\sum }}\left[ \frac{1}{k^{2r+2}}-\frac{1}{\left(
k+1\right) ^{2r+2}}\right] ks_{k}\right\} -cn^{2r+1}\underset{k=n}{\overset{%
\infty }{\sum }}\frac{1}{k^{2r+2}}.
\end{eqnarray*}
Notice that $s_k=o(1)$ and $\left[ \frac{1}{k^{2r+2}}-\frac{1}{\left(
k+1\right) ^{2r+2}}\right] k = \frac{(k+1)^{2r+2}-k^{2r+2}}{k^{2r+1}(k+1)^{2r+2}}=\frac{(2r+2)\xi_k^{2r+1}}{k^{2r+1}(k+1)^{2r+2}}$, where $\xi_k\in (k,k+1)$.
Thus,
\begin{eqnarray*}
n^{2r+1}\underset{k=n}{\overset{\infty }{\sum }}\frac{A_{k}}{k^{2r+1}}
&=&-\frac{\left( n-1\right) }{n}s_{n-1} +o\left(n^{2r+1}%
\underset{k=n}{\overset{\infty }{\sum }}\frac{1}{k^{2r+2}}\right)%
-cn^{2r+1}\underset{k=n}{\overset{\infty }{\sum }}\frac{1}{k^{2r+2}} \\
&\rightarrow &-\frac{c}{2r+1},\text{ \ \ }n\rightarrow \infty \text{,}
\end{eqnarray*}
and the proof is complete.
\begin{enumerate}
\item[b)]
\end{enumerate}
If $\Lambda $ is such that $\Lambda BV\supsetneqq HBV$, by \cite[Remark 4.,
p. 269]{AV3} there exists a continuous function $g\in \Lambda BV$ with the
property
\begin{equation}
\underset{k=1}{\overset{n}{\sum }}kA_{k}\neq O\left( n\right) .  \label{5a}
\end{equation}%
Suppose (\ref{5}) holds true for $g$ and some nonnegative integer $r$. Then,
denoting $\overset{\infty }{\underset{k=n}{\sum }}\frac{A_{k}}{k^{2r+1}}$ \
by $\sigma _{n}$, we get%
\begin{eqnarray*}
\underset{k=1}{\overset{n}{\sum }}kA_{k} &=&\underset{k=1}{\overset{n}{\sum }%
}k^{2r+2}\left( \sigma _{k}-\sigma _{k+1}\right) =\sigma _{1}+\underset{k=2}{%
\overset{n}{\sum }}\left( k^{2r+2}-(k-1)^{2r+2}\right) \sigma
_{k}-n^{2r+2}\sigma _{n+1}= \\
&=&\sigma _{1}+\underset{k=2}{\overset{n}{\sum }}\left(
(2r+2)k^{2r+1}+O\left( k^{2r}\right) \right) \sigma _{k}-n^{2r+2}\sigma
_{n+1}\text{.}
\end{eqnarray*}%
Hence,
\begin{equation*}
\frac{1}{n}\underset{k=1}{\overset{n}{\sum }}kA_{k}=\frac{\sigma _{1}}{n}+%
\frac{1}{n}\underset{k=2}{\overset{n}{\sum }}(2r+2)k^{2r+1}\sigma _{k}+\frac{%
1}{n}\underset{k=2}{\overset{n}{\sum }}O\left( k^{2r}\right) \sigma
_{k}-n^{2r+1}\sigma _{n+1}\text{.}
\end{equation*}%
Letting $n\rightarrow \infty $ and having in mind that%
\begin{gather*}
\frac{\sigma _{1}}{n}\rightarrow 0\text{, }\frac{1}{n}\underset{k=2}{\overset%
{n}{\sum }}(2r+2)k^{2r+1}\sigma _{k}\sim (2r+2)n^{2r+1}\sigma
_{n}\rightarrow -\frac{2r+2}{2r+1}c\text{, } \\
\frac{1}{n}\underset{k=2}{\overset{n}{\sum }}O\left( k^{2r}\right) \sigma
_{k}\sim \frac{1}{n}O\left( n^{2r+1}\right) \sigma _{n}\rightarrow 0\text{
and }n^{2r+1}\sigma _{n+1}\rightarrow -\frac{1}{2r+1}c\text{,}
\end{gather*}
we get%
\begin{equation*}
\frac{1}{n}\underset{k=1}{\overset{n}{\sum }}kA_{k}\rightarrow -c \text{.}
\end{equation*}%
This obviously contradicts (\ref{5a}).
\end{proof}

Making use of \cite[Theorem 1' (2)]{AV3} and following the same line of argumentation as in the proof of Theorem \ref{aaa}, one obtains

\begin{thm}
\label{aaa1}\mbox{}\\*[0pt]
\begin{enumerate}
\item[a)] Let $g\in HBV$ and $r=1,$ $2,$ \ldots . Then, for any point $%
x_{0}$ we have%
\begin{equation}
\underset{n\rightarrow \infty }{\lim }n^{2r}\tilde{R}_{n}^{\left( -2r\right)
}\left( g,x_{0}\right) =\frac{\left( -1\right) ^{r+1}}{2r \pi }\left[ g\left( x_{0}+0\right) -g\left( x_{0}-0\right) \right] ,
\end{equation}%
where $\tilde{R}_{n}\left( g,x\right) =\underset{k=n}{\overset{\infty }{\sum }}\left(
a_{k}\left( g\right) \sin kx-b_{k}\left( g\right) \cos kx\right) $ is the tail of the conjugate Fourier
series of $g$.

\item[b)] If $\Lambda $ is such that $\Lambda BV\supsetneqq HBV$, the
assertion a) does not hold for $\Lambda BV\setminus HBV$.
\end{enumerate}
\end{thm}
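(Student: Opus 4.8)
The plan is to imitate the proof of Theorem~\ref{aaa} step by step, with the conjugate Fourier series playing the r\^ole of the differentiated Fourier series and \cite[Theorem~1' (2)]{AV3} playing the r\^ole of \eqref{eq0}. For part~(a) I would first reproduce, in the conjugate setting, the computation that opens the proof of Theorem~\ref{aaa}: integrating the conjugate series $\sum_{k\geq 1}\bigl(a_{k}(g)\sin kx-b_{k}(g)\cos kx\bigr)$ term by term $2r$ times, with the normalisation $\int_{-\pi}^{\pi}(\cdot)\,dt=0$ imposed at each stage, gives
\[
\tilde{R}_{n}^{\left(-2r\right)}(g,x_{0})=(-1)^{r}\sum_{k=n}^{\infty}\frac{A_{k}}{k^{2r}},\qquad A_{k}=a_{k}(g)\sin kx_{0}-b_{k}(g)\cos kx_{0},
\]
so that, writing $c=\frac{1}{\pi}\bigl[g(x_{0}+0)-g(x_{0}-0)\bigr]$ as in Theorem~\ref{aaa}, the assertion is equivalent to $n^{2r}\sum_{k\geq n}A_{k}/k^{2r}\to -c/(2r)$. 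It is precisely here that the hypothesis $r\geq 1$ is forced: the elementary limit one relies on is $\lim_{n}n^{2r}\sum_{k\geq n}k^{-2r-1}=1/(2r)$, which is meaningless for $r=0$ because then $\sum k^{-1}$ diverges.

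Next I would invoke \cite[Theorem~1' (2)]{AV3}. Exactly as \eqref{eq0} was recast as \eqref{2}--\eqref{4} in terms of the partial sums $S_{n}'$, the conjugate statement concerns the partial sums $\tilde{S}_{n}(g,x_{0})=\sum_{k=1}^{n}A_{k}$ of the conjugate series; in the form I would use it, for $g\in HBV$ the sequence
\[
u_{n}:=\sum_{k=1}^{n}\Bigl(A_{k}+\frac{c}{k}\Bigr)=\tilde{S}_{n}(g,x_{0})+c\sum_{k=1}^{n}\frac{1}{k}
\]
converges to a finite limit $L$, whence $A_{k}=u_{k}-u_{k-1}-c/k$ (the analogue of \eqref{4}). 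Substituting this into $\sum_{k\geq n}A_{k}/k^{2r}$, splitting off the $c/k^{2r+1}$ part, and applying Abel's partial summation to $\sum_{k\geq n}(u_{k}-u_{k-1})/k^{2r}$ as in Theorem~\ref{aaa} --- now with weight $k^{-2r}$, whose consecutive differences telescope to $n^{-2r}$ --- one finds that the boundary term $-u_{n-1}$ and the term $n^{2r}\sum_{k\geq n}u_{k}\bigl(k^{-2r}-(k+1)^{-2r}\bigr)$ both tend to $L$ and cancel, leaving $n^{2r}\sum_{k\geq n}A_{k}/k^{2r}=o(1)-c\,n^{2r}\sum_{k\geq n}k^{-2r-1}\to -c/(2r)$, as required. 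The only ingredients beyond those of Theorem~\ref{aaa} are the shifted mean-value bound $k\bigl(k^{-2r-1}-(k+1)^{-2r-1}\bigr)=O(k^{-2r-1})$ and the limit $n^{2r}\sum_{k\geq n}k^{-2r-1}\to 1/(2r)$.

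For part~(b) I would run the converse of Theorem~\ref{aaa}(b) without essential change. If $\Lambda BV\supsetneqq HBV$, the conjugate counterpart of \cite[Remark~4]{AV3} (recording that $HBV$ is the limiting $\Lambda BV$-class for \cite[Theorem~1' (2)]{AV3}) provides a continuous $g\in\Lambda BV$ with $\tilde{S}_{n}(g,x_{0})=\sum_{k=1}^{n}A_{k}\neq O(\log n)$. Supposing the identity of part~(a) held for this $g$ and some $r\geq 1$, set $\sigma_{n}=\sum_{k\geq n}A_{k}/k^{2r}$, so that $A_{k}=k^{2r}(\sigma_{k}-\sigma_{k+1})$, and Abel summation gives
\[
\sum_{k=1}^{n}A_{k}=\sigma_{1}+\sum_{k=2}^{n}\bigl(2r\,k^{2r-1}+O(k^{2r-2})\bigr)\sigma_{k}-n^{2r}\sigma_{n+1}.
\]
Dividing by $\log n$ and letting $n\to\infty$, with $n^{2r}\sigma_{n}\to -c/(2r)$, one obtains --- just as in Theorem~\ref{aaa}(b) --- $\tfrac{1}{\log n}\sum_{k=1}^{n}A_{k}\to -c$, i.e.\ $\sum_{k=1}^{n}A_{k}=-c\log n+o(\log n)=O(\log n)$, contradicting the choice of $g$.

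The calculations themselves are routine; the point I expect to demand the most care is the first display, namely the identification $\tilde{R}_{n}^{(-2r)}(g,x_{0})=(-1)^{r}\sum_{k\geq n}A_{k}/k^{2r}$. One must verify that the conjugate function $\tilde{g}$ is integrable on $[-\pi,\pi]$ (it is, since $g\in HBV\subset L^{\infty}$ and conjugation carries $L^{\infty}$ into $\bigcap_{p<\infty}L^{p}$), that its Fourier series is the conjugate series of $g$, and that the $2r$ successive term-by-term integrations with the prescribed normalisation reproduce exactly that series; granting this, together with \cite[Theorem~1' (2)]{AV3} in the reformulated shape used above and its $\Lambda BV$-sharpness, the argument runs word for word as in Theorem~\ref{aaa}.
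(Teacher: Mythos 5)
Your reduction of the statement to $n^{2r}\sum_{k\ge n}A_{k}/k^{2r}\to -c/(2r)$, the identification $\tilde{R}_{n}^{(-2r)}(g,x_{0})=(-1)^{r}\sum_{k\ge n}A_{k}/k^{2r}$, and the observation that $r\ge 1$ is forced by the limit $n^{2r}\sum_{k\ge n}k^{-2r-1}\to 1/(2r)$ are all correct. The gap is in the input you feed into the summation by parts. You recast \cite[Theorem 1' (2)]{AV3} as the claim that $u_{n}=\sum_{k=1}^{n}\bigl(A_{k}+c/k\bigr)=\tilde{S}_{n}(g,x_{0})+c\sum_{k\le n}1/k$ converges to a finite limit $L$. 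That claim is false already for $BV\subset HBV$: take $f(t)=1/\log (2\pi /t)$ on $(0,\pi ]$ and $f\equiv 0$ on $[-\pi ,0]$, extended periodically. This $f$ is of bounded variation and continuous at $0$, so $c=0$ there, yet the truncated conjugate integral $-\frac{1}{\pi}\int_{1/n}^{\pi}\frac{f(t)-f(-t)}{2\tan (t/2)}\,dt$ diverges like $-\frac{1}{\pi}\log \log n$, and by the classical localization theorem for conjugate series of $BV$ functions $\tilde{S}_{n}(f,0)\to -\infty$; hence $u_{n}$ diverges. What the conjugate-series jump formula actually provides is only $\tilde{S}_{n}(g,x_{0})/\log n\to -c$, and if you insert that weaker information into your Abel summation the boundary term $-u_{n-1}$ and the weighted sum no longer cancel up to $o(1)$: you obtain $n^{2r}\sum_{k\ge n}A_{k}/k^{2r}=-c/(2r)+o(\log n)$, which proves nothing. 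Your part (b) inherits the same defect: it needs, for every $\Lambda$ with $\Lambda BV\supsetneqq HBV$, a continuous $g\in \Lambda BV$ with $\sum_{k\le n}A_{k}\neq O(\log n)$, which is not what \cite[Remark 4]{AV3} supplies (that remark gives a function with $\sum_{k\le n}kA_{k}\neq O(n)$).

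The repair is to not pass to the conjugate partial sums at all. Since $\sum_{k\ge n}A_{k}/k^{2r}=\sum_{k\ge n}kA_{k}/k^{2r+1}$, the correct input is exactly the one used for Theorem \ref{aaa}, namely $s_{n}=c+\frac{1}{n}\sum_{k\le n}kA_{k}=o(1)$ from \eqref{2}: write $kA_{k}=ks_{k}-(k-1)s_{k-1}-c$ as in \eqref{4} and sum by parts against $k^{-2r-1}$ in place of $k^{-2r-2}$. The mean-value estimate $k\bigl(k^{-2r-1}-(k+1)^{-2r-1}\bigr)=O\bigl(k^{-2r-1}\bigr)$ together with $s_{k}=o(1)$ makes the boundary term and the weighted sum $o(n^{-2r})$, and $-cn^{2r}\sum_{k\ge n}k^{-2r-1}\to -c/(2r)$, which is the assertion; this is what ``the same line of argumentation'' amounts to, with the exponent $2r$ replacing $2r+1$. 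Part (b) then goes through with the very function of \cite[Remark 4]{AV3}: assuming $n^{2r}\sigma _{n}\to -c/(2r)$ with $\sigma _{n}=\sum_{k\ge n}A_{k}/k^{2r}$, one has $kA_{k}=k^{2r+1}(\sigma _{k}-\sigma _{k+1})$, and the Abel computation of Theorem \ref{aaa}(b) with $2r+1$ in place of $2r+2$ yields $\frac{1}{n}\sum_{k\le n}kA_{k}\to -c$, contradicting $\sum_{k\le n}kA_{k}\neq O(n)$. Your logarithmic variant of (b) is internally consistent but rests on an existence statement you have not sourced.
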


\section{Generalized Fourier-Jacobi and Fourier-Chebyshev series}

\subsection{Notation}

\ \ By $C^{p}\left[ -1,1\right] $, $p\in
\mathbb{N}
_{0}$, we denote the space of $p-$times continuously differentiable
functions on $\left[ -1,1\right] $, where $C^{0}\left[ -1,1\right] \equiv C%
\left[ -1,1\right] $ is the space of continuous functions. Let $C^{-1}\left[
-1,1\right] $ denote the space of functions defined on $\left[ -1,1\right] $
which may have discontinuities only of the first kind and which are
normalized by the condition $f(x)=\left( f(x+0)+f\left( x-0\right) \right)
/2 $. If $f\in C^{-1}\left[ -1,1\right] $ has finitely many discontinuities,
let $M\equiv M(f)$ denote their number. By $x_{m}\equiv x_{m}(f)$ and $\left[
f\right] _{m}\equiv f(x_{m}+0)-$ $f(x_{m}-0)$, $m=1,\ldots ,M$, we denote
the points of discontinuity and the associated jumps of the function $f$. The $r$th derivative
of a function $f$ which piecewise belongs to $C^{p}\left[ -1,1\right] $, $%
p\geq r$, or which belongs to $C^{r-1}\left[ -1,1\right] $, is defined as $%
f^{(r)}(x)=\left( f^{(r)}(x+0)+f^{(r)}\left( x-0\right) \right) /2$,
whenever $f^{(r)}(x\pm 0)$ exist.

We say that $\mathbf{w}$\textbf{\ }is a \textit{generalized Jacobi weight},
i.e., $\mathbf{w}\in GJ$, if
\begin{gather*}
\mathbf{w}(t)=h(t)\left( 1-t\right) ^{\alpha }\left( 1+t\right) ^{\beta
}\left\vert t-\tilde{x}_{1}\right\vert ^{\delta _{1}}\cdots \left\vert
t-\tilde{x}_{N}\right\vert ^{\delta _{N}}\text{,} \\
h\in C\left[ -1,1\right] \text{, \ \ }h(t)>0\text{ \ }\left( \left\vert
t\right\vert \leq 1\right) \text{, \ \ }\omega \left( h;t;\left[ -1,1\right]
\right) t^{-1}\in L^{1}\left[ 0,1\right] \text{,} \\
-1<\tilde{x}_{1}<\cdots <\tilde{x}_{N}<1\text{, \ \ \ \ }\alpha \text{, }\beta \text{, }%
\delta _{1}\text{, \ldots , }\delta _{N}>-1\text{,}
\end{gather*}%
where $L^{1}\left[ 0,1\right] $ is the space of Lebesgue integrable
functions on $\left[ 0,1\right] $ and
\begin{equation*}
\omega \left( f;t;\left[ -1,1\right] \right) =\max \left\{ \left\vert
f(x)-f(y)\right\vert :x,y\in \left[ -1,1\right] \wedge \left\vert
x-y\right\vert \leq t\right\}
\end{equation*}%
is the modulus of continuity of $f\in C\left[ -1,1\right] $ on $\left[ -1,1%
\right] $. We always assume $\tilde{x}_{0}=-1$, and $\tilde{x}_{N+1}=1$. In
addition, for a fixed $\varepsilon \in \left( 0,\left( \tilde{x}_{\nu +1}-%
\tilde{x}_{\nu }\right) /2\right) $, $\nu =0,1,\ldots ,N$, we set $\Delta
\left( \nu ;\varepsilon \right) =\left[ \tilde{x}_{\nu }+\varepsilon ,\tilde{%
x}_{\nu +1}-\varepsilon \right] $.

By $\sigma \left( \mathbf{w}\right) =\left( P_{n}\left( \mathbf{w};x\right)
\right) _{n=0}^{\infty }$ we denote the system of algebraic polynomials $%
P_{n}\left( \mathbf{w};x\right) =\gamma _{n}(\mathbf{w})x^{n}+$ \textit{%
lower degree terms }with positive leading coeffients $\gamma _{n}(\mathbf{w}%
) $, which are orthonormal on $\left[ -1,1\right] $ with respect to the
weight $\mathbf{w}\in GJ$, i.e.,%
\begin{equation*}
\int_{-1}^{1}P_{n}\left( \mathbf{w};t\right) P_{m}\left( \mathbf{w};t\right)
\mathbf{w}(t)dt=\delta _{nm}\text{.}
\end{equation*}%
Such polynomials are called the \textit{generalized Jacobi polynomials}.

If $f\mathbf{w}\in L[-1,1]$, $\mathbf{w}\in GJ$, then $f$ has a Fourier
series with respect to the system $\sigma \left( \mathbf{w}\right) $, which
we will call \textit{generalized Fourier-Jacobi series}. Let $S_{n}(\mathbf{w%
};f;x)$ and $R_{n}(\mathbf{w};f;x)$ denote the $n$th partial sum and $n$th
order tail of the generalized Fourier-Jacobi series of $f$, respectively,
i.e.,%
\begin{gather*}
S_{n}(\mathbf{w};f;x)=\sum_{k=0}^{n-1}a_{k}\left( \mathbf{w};f\right)
P_{k}\left( \mathbf{w};x\right) =\int_{-1}^{1}f\left( t\right) K_{n}\left(
\mathbf{w};x;t\right) \mathbf{w}(t)dt\text{,} \\
R_{n}(\mathbf{w};f;x)=\sum_{k=n}^{\infty }a_{k}\left( \mathbf{w};f\right)
P_{k}\left( \mathbf{w};x\right) \text{,}
\end{gather*}%
where%
\begin{equation*}
a_{k}\left( \mathbf{w};f\right) =\int_{-1}^{1}f\left( t\right) P_{k}\left(
\mathbf{w};t\right) \mathbf{w}(t)dt
\end{equation*}%
is the $k$th Fourier coefficient of the function $f$, and%
\begin{equation*}
K_{n}\left( \mathbf{w};x;t\right) =\sum_{k=0}^{n-1}P_{k}\left( \mathbf{w}%
;x\right) P_{k}\left( \mathbf{w};t\right)
\end{equation*}%
is the Dirichlet kernel of the system $\sigma \left( \mathbf{w}\right) $.

When $h(t)\equiv 1$, $\left\vert t\right\vert \leq 1$, and $N=0$ (i.e., a
weight does not have singularities strictly inside the interval $\left(
-1,1\right) $), $\mathbf{w}\in GJ$ is called a Jacobi weight, and in this
case we use the commonly accepted notation "$\left( \alpha ,\beta \right) $"
instead of "$\mathbf{w}$" throughout. For example, we write $S_{n}^{(\alpha
,\beta )}(f;x)$ instead of $S_{n}(\mathbf{w};f;x)$ and the corresponding
series is called Fourier-Jacobi series. If $\alpha =\beta =-\frac{1}{2}$,
the corresponding Fourier-Jacobi series becomes Fourier-Chebyshev series.

\subsection{Equiconvergence}

\ \ We shall start with a simple proposition on convergence of the generalized Fourier-Jacobi series for functions of harmonic bounded variation.

\begin{prop}
\label{prop}
Let $f\in HBV$, $f\mathbf{w}\in L[-1,1]$, $\mathbf{w}\in GJ$. Then
$$\underset{n\rightarrow \infty }{\lim }S_{n}(\mathbf{w};f;x) = \frac{f(x+0)+f(x-0)}{2}$$
for every $x\in \left( -1,1\right) $, $x\neq \tilde{x}_{1},\ldots ,\tilde{x}_{N}$.

\begin{proof}
Let $S_{n}^{(-\frac{1}{2},-\frac{1}{2})}(f;x)$ be the $n$th partial sum of the Fourier-Chebyshev series of $f$. Kvernadze \cite[proof of
Theorem 7, p. 185]{KV1} proved uniform equiconvergence of Fourier-Chebyshev and generalized Fourier-Jacobi series for an arbitrary function $f\in HBV$
and a fixed $\varepsilon \in \left( 0,\frac{\tilde{x}_{\nu +1}-\tilde{x}_{\nu }}{2}\right) $%
, $\nu =0,1,2,\ldots ,N$,
\begin{equation}
\Vert S_{n}(\mathbf{w};f;x)-S_{n}^{(-\frac{1}{2},-\frac{1}{2})}(f;x)\Vert
_{C[\Delta (\nu ;\varepsilon )]}=o(1)\text{.}  \label{eq1}
\end{equation}%
Putting $x=\cos \theta$, $\theta\in \left( 0 , \pi \right) $, and $g(\theta )=f(\cos \theta )$, and taking into account that $g\left( \theta \mp 0\right) = f\left(
x\pm 0\right)$, we get
\begin{equation*}
S_{n}^{(-\frac{1}{2},-\frac{1}{2})}(f;x)=S_{n}(g,\theta) \rightarrow \frac{g(\theta+0)+g(\theta-0)}{2} = \frac{f(x+0)+f(x-0)}{2} \text{ as } n\rightarrow \infty
\end{equation*}
according to Waterman \cite[Theorem 2, p. 112]{WA}. For $x\neq \tilde{x}_{1},\ldots ,\tilde{x}_{N}$ there exist $\nu _{0}$ and $\varepsilon$ such that $x\in \left[ \tilde{x}_{\nu _{0} }+\varepsilon ,\tilde{x}_{\nu _{0} +1}-\varepsilon \right]$. Now, we have
\begin{gather*}
\left\vert S_{n}(\mathbf{w};f;x)-\frac{f(x+0)+f(x-0)}{2}\right\vert \leq \\
\left\vert S_{n}(\mathbf{w};f;x)-S_{n}^{(-1/2,-1/2)}(f;x)\right\vert
+\left\vert S_{n}^{(-1/2,-1/2)}(f;x)-\frac{f(x+0)+f(x-0)}{2}\right\vert  \\
\leq \left\Vert S_{n}(\mathbf{w};f;x)-S_{n}^{(-1/2,-1/2)}(f;x)\right\Vert _{C%
\left[ \Delta \left( \nu _{0};\varepsilon \right) \right] }+\left\vert
S_{n}^{(-1/2,-1/2)}(f;x)-\frac{f(x+0)+f(x-0)}{2}\right\vert \\
=o\left(1\right) \text{.}
\end{gather*}
\end{proof}
\end{prop}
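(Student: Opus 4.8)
The plan is to reduce convergence of the generalized Fourier–Jacobi series $S_n(\mathbf w;f;x)$ to the already-known convergence of the ordinary Fourier series of $f\in HBV$, using Kvernadze's equiconvergence theorem as the bridge. First I would invoke the equiconvergence result of Kvernadze, which asserts that for $f\in HBV$ and a fixed $\varepsilon>0$ one has $\|S_n(\mathbf w;f;\cdot)-S_n^{(-1/2,-1/2)}(f;\cdot)\|_{C[\Delta(\nu;\varepsilon)]}=o(1)$, i.e.\ the Fourier–Jacobi partial sums behave, uniformly on compact subsets avoiding the interior singularities of $\mathbf w$, exactly like the Fourier–Chebyshev partial sums. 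This reduces the problem to showing that $S_n^{(-1/2,-1/2)}(f;x)\to \tfrac12(f(x+0)+f(x-0))$.

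Next I would handle the Fourier–Chebyshev case by the standard substitution $x=\cos\theta$, $\theta\in(0,\pi)$, $g(\theta)=f(\cos\theta)$. The key observations are (i) the Fourier–Chebyshev partial sum of $f$ at $x=\cos\theta$ equals the ordinary trigonometric Fourier partial sum of $g$ at $\theta$, $S_n^{(-1/2,-1/2)}(f;x)=S_n(g;\theta)$; (ii) since $\cos$ is a strictly decreasing homeomorphism of $(0,\pi)$ onto $(-1,1)$, the one-sided limits satisfy $g(\theta\mp0)=f(x\pm0)$; and (iii) the composition $g=f\circ\cos$ is again of harmonic bounded variation on $(0,\pi)$, as $HBV$ is preserved under composition with a monotone $C^1$ function. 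Then Waterman's theorem on everywhere convergence of Fourier series of $\Lambda BV$ functions (in particular $HBV$) gives $S_n(g;\theta)\to\tfrac12(g(\theta+0)+g(\theta-0))=\tfrac12(f(x+0)+f(x-0))$.

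Finally I would combine the two pieces by a triangle inequality. Given $x\in(-1,1)$ with $x\neq\tilde x_1,\ldots,\tilde x_N$, choose $\nu_0$ and $\varepsilon$ so that $x\in\Delta(\nu_0;\varepsilon)=[\tilde x_{\nu_0}+\varepsilon,\tilde x_{\nu_0+1}-\varepsilon]$; then
\[
\Bigl|S_n(\mathbf w;f;x)-\tfrac{f(x+0)+f(x-0)}{2}\Bigr|\le\|S_n(\mathbf w;f;\cdot)-S_n^{(-1/2,-1/2)}(f;\cdot)\|_{C[\Delta(\nu_0;\varepsilon)]}+\Bigl|S_n^{(-1/2,-1/2)}(f;x)-\tfrac{f(x+0)+f(x-0)}{2}\Bigr|,
\]
and both terms tend to $0$ by the two steps above.

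The substantive inputs are external (Kvernadze's equiconvergence and Waterman's convergence theorem), so the only genuine point requiring care is the claim that $g=f\circ\cos\in HBV(0,\pi)$; this follows because for any nonoverlapping subintervals $J_i\subset(0,\pi)$ the images $\cos(J_i)$ are nonoverlapping subintervals of $(-1,1)$ with $|g(J_i)|=|f(\cos(J_i))|$, so $V_\Lambda(g;(0,\pi))\le V_\Lambda(f;(-1,1))<\infty$ for $\Lambda=\{n\}$. Everything else is bookkeeping with the triangle inequality, and I expect no real obstacle beyond quoting the cited theorems correctly.
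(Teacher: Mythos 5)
Your proposal is correct and follows essentially the same route as the paper: Kvernadze's equiconvergence estimate, the substitution $x=\cos\theta$ with Waterman's theorem for the Chebyshev case, and the triangle inequality on $\Delta(\nu_0;\varepsilon)$. The only difference is that you explicitly verify $g=f\circ\cos\in HBV$ (a point the paper leaves implicit), and your argument for it is sound.
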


\begin{cor}
\label{cor}
Let $f \in HBV$ and $\Delta \left( \nu;\varepsilon \right)$ be as above. Then
\begin{equation}
\Vert R_{n}(\mathbf{w};f;x)-R_{n}^{(-\frac{1}{2},-\frac{1}{2})}(f;x)\Vert
_{C[\Delta (\nu ;\varepsilon )]}=o(1)\text{.}  \label{eq2}
\end{equation}

\begin{proof}
For $x\in \left( -1,1\right) $, $x\neq \tilde{x}_{1},\ldots ,\tilde{x}_{N}$ Proposition \ref{prop} gives us

\begin{eqnarray*}
S_{n}(\mathbf{w};f;x) &=&\frac{f(x+0)+f(x-0)}{2}-R_{n}(\mathbf{w};f;x), \\
S_{n}^{(-\frac{1}{2},-\frac{1}{2})}(f;x) &=&\frac{f(x+0)+f(x-0)}{2}-R_{n}^{(-\frac{1}{2},-\frac{%
1}{2})}(f;x).
\end{eqnarray*}
This and \eqref{eq1} yield the assertion.
\end{proof}
\end{cor}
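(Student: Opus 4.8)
The plan is to turn the difference of the two tails into the difference of the two partial sums and then quote the equiconvergence relation \eqref{eq1}. The point is that on $\Delta(\nu;\varepsilon)=\left[\tilde{x}_{\nu}+\varepsilon,\tilde{x}_{\nu+1}-\varepsilon\right]$ every $x$ is distinct from the interior nodes $\tilde{x}_{1},\ldots,\tilde{x}_{N}$, so Proposition \ref{prop} is available at each such point. First I would use the defining relations of the two systems, namely that $S_{n}(\mathbf{w};f;x)+R_{n}(\mathbf{w};f;x)$ is the full generalized Fourier-Jacobi series of $f$ at $x$ and, analogously, that $S_{n}^{(-1/2,-1/2)}(f;x)+R_{n}^{(-1/2,-1/2)}(f;x)$ is the full Fourier-Chebyshev series; letting $n\to\infty$ in the partial sums and invoking Proposition \ref{prop} for the first series and the Waterman-theorem step used inside its proof for the second, each of these sums equals $\tfrac{1}{2}\bigl(f(x+0)+f(x-0)\bigr)$. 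Hence for $x\in\Delta(\nu;\varepsilon)$,
\begin{gather*}
R_{n}(\mathbf{w};f;x)=\frac{f(x+0)+f(x-0)}{2}-S_{n}(\mathbf{w};f;x),\\
R_{n}^{(-1/2,-1/2)}(f;x)=\frac{f(x+0)+f(x-0)}{2}-S_{n}^{(-1/2,-1/2)}(f;x).
\end{gather*}

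Subtracting these two identities, the common mean value cancels, leaving
\[
R_{n}(\mathbf{w};f;x)-R_{n}^{(-1/2,-1/2)}(f;x)=S_{n}^{(-1/2,-1/2)}(f;x)-S_{n}(\mathbf{w};f;x)
\]
for every $x\in\Delta(\nu;\varepsilon)$. Taking $\Vert\cdot\Vert_{C[\Delta(\nu;\varepsilon)]}$ of both sides and applying \eqref{eq1} then gives \eqref{eq2}, which completes the argument.

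I do not expect any genuine obstacle here; the one point worth flagging is that the cancellation of the mean value is exactly what promotes the merely pointwise convergence furnished by Proposition \ref{prop} to a uniform $o(1)$ bound, since all the uniformity is carried by \eqref{eq1} and no separate uniform estimate on $S_{n}$ or $S_{n}^{(-1/2,-1/2)}$ is needed. It is also worth checking that the tail $R_{n}^{(-1/2,-1/2)}$ appearing in \eqref{eq2} is the one complementary to the Fourier-Chebyshev partial sum $S_{n}^{(-1/2,-1/2)}$, which is immediate from the conventions introduced above.
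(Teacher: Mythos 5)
Your proposal is correct and follows essentially the same route as the paper: express each tail as the pointwise limit $\tfrac{1}{2}(f(x+0)+f(x-0))$ minus the corresponding partial sum (via Proposition \ref{prop} and the Waterman step for the Chebyshev series), subtract so the mean value cancels, and conclude from the uniform equiconvergence \eqref{eq1}. Your remark that all the uniformity is carried by \eqref{eq1} is exactly the point of the paper's argument as well.
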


\subsection{Determination of a jump}

\ \ In order to prove an unconditional result on determination of a jump discontinuity of a function $f \in V_2$ by the tails of its integrated Fourier-Chebyshev series, we shall need the following lemma (cf. \cite[Remark, p. 236]{AV1}). For the sake of completeness of the argument, we include the proof of the Lemma.

\begin{lem}
\label{bbb} Let $f\in V_{2}$ be a $2\pi -$periodic function. Then, $%
n\sum_{k=n}^{\infty }\rho _{k}^{2}(f)=O\left( 1\right) $, where $\rho
_{k}^{2}(f)=a_{k}^{2}(f)+b_{k}^{2}(f)$ is the magnitude of the $k$th Fourier
coefficient.

\begin{proof}
If the Fourier series of $f$ is given by%
\begin{equation*}
f(x)\sim \frac{a_{0}}{2}+\sum_{m=1}^{\infty }a_{m}\cos mx+b_{m}\sin mx,
\end{equation*}%
then the Fourier series of $f(\cdot +t)$ reads
\begin{eqnarray*}
f(x+t) &\sim & \frac{a_{0}}{2}+\sum_{m=1}^{\infty }A_{m}\left( t\right) \cos
mx+B_{m}\left( t\right) \sin mx\text{,}
\end{eqnarray*}%
where $A_{m}\left( t\right) =a_{m}\cos mt+b_{m}\sin mt$ and $%
B_{m}\left( t\right) =b_{m}\cos mt-a_{m}\sin mt$. Thus,
\begin{equation*}
f(x+t)-f(x)\sim \sum_{m=1}^{\infty }\left( A_{m}\left( t\right)
-a_{m}\right) \cos mx+\left( B_{m}\left( t\right) -b_{m}\right) \sin mx\text{%
.}
\end{equation*}%
Simple calculations yield
\begin{equation*}
A_{m}\left( t\right) -a_{m}=2B_{m}\left( \frac{t}{2}\right) \sin \frac{mt}{2}%
\text{ and }B_{m}\left( t\right) -b_{m}=-2A_{m}\left( \frac{t}{2}\right)
\sin \frac{mt}{2}\text{.}
\end{equation*}%
Hence,%
\begin{equation*}
f(x+\frac{\pi }{n})-f(x)\sim 2\sum_{m=1}^{\infty }\left[ B_{m}\left( \frac{%
\pi }{2n}\right) \cos mx-A_{m}\left( \frac{\pi }{2n}\right) \sin mx\right]
\sin \frac{m\pi }{2n}\text{.}
\end{equation*}%
Parseval's identity gives us
\begin{equation*}
\frac{1}{\pi }\int_{0}^{2\pi }\left[ f\left( x+\frac{\pi }{n}\right)
-f\left( x\right) \right] ^{2}dx=4\sum_{m=1}^{\infty }\left[ A_{m}^{2}\left(
\frac{\pi }{2n}\right) +B_{m}^{2}\left( \frac{\pi }{2n}\right) \right] \sin
^{2}\frac{m\pi }{2n}\text{.}
\end{equation*}%
Since $A_{m}^{2}\left( t\right) +B_{m}^{2}\left( t\right)
=a_{m}^{2}+b_{m}^{2}=\rho _{m}^{2}$, the last equation becomes
\begin{equation*}
\frac{1}{\pi }\int_{0}^{2\pi }\left[ f\left( x+\frac{\pi }{n}\right)
-f\left( x\right) \right] ^{2}dx=4\sum_{m=1}^{\infty }\rho _{m}^{2}\sin ^{2}%
\frac{m\pi }{2n}\text{.}
\end{equation*}%
Due to the periodicity of $f$, we have%
\begin{equation*}
\frac{1}{\pi }\int_{0}^{2\pi }\left[ f\left( x+k\frac{\pi }{n}\right)
-f\left( x+(k-1)\frac{\pi }{n}\right) \right] ^{2}dx=4\sum_{m=1}^{\infty
}\rho _{m}^{2}\sin ^{2}\frac{m\pi }{2n}
\end{equation*}%
for every positive integer $k$.
Therefore,
\begin{equation*}
\sum_{k=1}^{2n}\frac{1}{\pi }\int_{0}^{2\pi }\left[ f\left( x+k\frac{\pi }{n}%
\right) -f\left( x+(k-1)\frac{\pi }{n}\right) \right] ^{2}dx=8n\sum_{m=1}^{%
\infty }\rho _{m}^{2}\sin ^{2}\frac{m\pi }{2n}\text{.}
\end{equation*}%
Changing the order of summation and integration on the left-hand side in the above equation and taking into account that $f\in V_2$, we get
\begin{equation*}
n\sum_{m=1}^{\infty }\rho _{m}^{2}\sin ^{2}\frac{m\pi }{2n}=O(1)\text{.}
\end{equation*}%
Now,
\begin{equation*}
n\sum_{k=1}^{\infty }\rho _{k}^{2}\sin ^{2}\frac{k\pi }{2n}\geq
n\sum_{k=1}^{n}\rho _{k}^{2}\sin ^{2}\frac{k\pi }{2n}\geq
n\sum_{k=1}^{n}\rho _{k}^{2}\left( \frac{2}{\pi }\cdot \frac{k\pi }{2n}%
\right) ^{2}=\frac{1}{n}\sum_{k=1}^{n}k^{2}\rho _{k}^{2}\text{.}
\end{equation*}%
Thus,%
\begin{equation*}
\frac{1}{n}\sum_{k=1}^{n}k^{2}\rho _{k}^{2}=O(1)\text{.}
\end{equation*}%
Using Abel's partial summation formula, we get%
\begin{eqnarray*}
\sum_{k=n}^{m}\rho _{k}^{2} &=&\sum_{k=n}^{m}\frac{1}{k^{2}}\left( k^{2}\rho
_{k}^{2}\right) =\frac{1}{m^{2}}\sum_{i=n}^{m}i^{2}\rho
_{i}^{2}+\sum_{k=n}^{m-1}\left( \frac{1}{k^{2}}-\frac{1}{(k+1)^{2}}\right)
\sum_{i=n}^{k}i^{2}\rho _{i}^{2} \\
&=&O(1)\left[ \frac{1}{m}\cdot \frac{1}{m}\sum_{i=n}^{m}i^{2}\rho
_{i}^{2}+\sum_{k=n}^{m-1}\left( \frac{1}{k}-\frac{1}{k+1}\right) \frac{1}{k}%
\sum_{i=n}^{k}i^{2}\rho _{i}^{2}\right]  \\
&=&O(1)\left[ \frac{1}{m}+\sum_{k=n}^{m-1}\left( \frac{1}{k}-\frac{1}{k+1}%
\right) \right] =O\left(\frac{1}{n}\right)
\end{eqnarray*}%
for arbitrary positive integer $m>n$. Hence,
\begin{equation*}
n\sum_{k=n}^{\infty }\rho _{k}^{2}=O(1)\text{.}
\end{equation*}
\end{proof}
\end{lem}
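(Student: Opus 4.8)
The plan is to tie the tail $\sum_{k\ge n}\rho_k^2$ to the $L^2$-modulus of continuity of $f$, and then to exploit that membership in $V_2$ bounds quadratic sums of increments taken over arbitrary partitions into nonoverlapping subintervals.

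First I would expand the translate $f(\cdot+t)$ in its Fourier series and subtract, so that the $m$th coefficients of $f(x+t)-f(x)$ become $A_m(t)-a_m$ and $B_m(t)-b_m$, where $A_m(t)=a_m\cos mt+b_m\sin mt$ and $B_m(t)=b_m\cos mt-a_m\sin mt$. Elementary trigonometric identities give $A_m(t)-a_m=2B_m(t/2)\sin(mt/2)$ and $B_m(t)-b_m=-2A_m(t/2)\sin(mt/2)$, and since $A_m^2(t)+B_m^2(t)=a_m^2+b_m^2=\rho_m^2$ for every $t$, Parseval's identity yields
\[
\frac{1}{\pi}\int_{0}^{2\pi}\bigl[f(x+t)-f(x)\bigr]^2\,dx=4\sum_{m=1}^{\infty}\rho_m^2\sin^2\frac{mt}{2}.
\]

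Next I would specialize to $t=\pi/n$. By $2\pi$-periodicity the same identity holds with $x$ replaced by $x+(k-1)\pi/n$, so adding over $k=1,\dots,2n$ gives
\[
\sum_{k=1}^{2n}\frac{1}{\pi}\int_{0}^{2\pi}\Bigl[f\bigl(x+\tfrac{k\pi}{n}\bigr)-f\bigl(x+\tfrac{(k-1)\pi}{n}\bigr)\Bigr]^2\,dx=8n\sum_{m=1}^{\infty}\rho_m^2\sin^2\frac{m\pi}{2n}.
\]
The decisive point is now that, after interchanging the sum and the integral on the left, for each fixed $x$ the points $x,\,x+\pi/n,\,\dots,\,x+2\pi$ partition an interval of length $2\pi$ into $2n$ nonoverlapping subintervals, so the inner sum is at most $V_2(f)^2$; hence the left side is $\le 2V_2(f)^2$, independently of $n$, and therefore $n\sum_m\rho_m^2\sin^2(m\pi/2n)=O(1)$. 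Restricting to $1\le m\le n$ and using $\sin\theta\ge(2/\pi)\theta$ on $[0,\pi/2]$, so that $\sin(m\pi/2n)\ge m/n$, this produces $\tfrac1n\sum_{k=1}^{n}k^2\rho_k^2=O(1)$.

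Finally I would pass from this partial-sum estimate to the tail estimate by Abel's partial summation: writing $\rho_k^2=k^{-2}(k^2\rho_k^2)$ and using that $\sum_{i=n}^{k}i^2\rho_i^2=O(k)$ uniformly in $n$, one gets for every $m>n$ that $\sum_{k=n}^{m}\rho_k^2=O(1/m)+O(1)\sum_{k=n}^{m-1}\bigl(\tfrac1k-\tfrac1{k+1}\bigr)=O(1/n)$, and letting $m\to\infty$ yields $\sum_{k=n}^{\infty}\rho_k^2=O(1/n)$, i.e. $n\sum_{k=n}^{\infty}\rho_k^2=O(1)$. The only genuinely nonroutine step is the uniform-in-$n$ bound for the left-hand side of the displayed identity, which is exactly where the hypothesis $f\in V_2$ enters; everything else is Parseval together with two elementary inequalities and a bookkeeping application of Abel summation.
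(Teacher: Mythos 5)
Your proposal is correct and follows essentially the same route as the paper's own proof: the translation/Parseval identity for the $L^2$-modulus, summation over the $2n$ shifts to exploit the $V_2$ bound on quadratic sums over nonoverlapping intervals, the inequality $\sin\theta\geq (2/\pi)\theta$ to extract $\frac{1}{n}\sum_{k=1}^{n}k^{2}\rho_{k}^{2}=O(1)$, and Abel summation to convert this into the tail estimate. No gaps.
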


\ \ Now, we can turn our attention to determination of jump discontinuities by means of the tails of integrated Fourier-Chebyshev series.

\begin{thm}
\label{ccc} \ \newline
\begin{enumerate}
\item[a)]If $f\in HBV$ has finitely many discontinuities, then
\begin{equation}
\underset{n\rightarrow \infty }{\lim }n\left[ R_{n}^{(-\frac{1}{2},-\frac{1%
}{2})}(f;x)\right] ^{(-1)}=-\frac{(1-x^{2})^{\frac{1}{2}}}{\pi }%
(f(x+0)-f(x-0)) \label{alfa}
\end{equation}%
is valid for each fixed $x\in \left( -1,1\right) $, where
\begin{equation*}
\left[ R_{n}^{(-\frac{1}{2},-\frac{1%
}{2})}(f;x)\right] ^{(-1)}=\int_{-1}^{x}R_{n}^{(-\frac{1}{2},-\frac{1%
}{2})}(f;y) \ d y \text{.}
\end{equation*}

\item[b)]If $f\in V_{2}$,
then the relation \eqref{alfa} holds true without restriction on the number of
discontinuities.
\end{enumerate}

\begin{proof}
Integrating $R_{n}^{(-\frac{1}{2},-\frac{1}{2})}(f;y)$ on $\left[ -1,x\right]
$ and using the identity
\begin{equation*}
R_{n}^{(-\frac{1}{2},-\frac{1}{2})}(f;y)=R_{n}(g,\theta )\text{,}
\end{equation*}%
where $y=\cos \theta $, we get
\begin{equation}
\begin{aligned} \lbrack R_{n}^{(-\frac{1}{2},-\frac{1}{2})}(f;x)]^{(-1)}&
=\int_{\arccos x}^{\pi }R_{n}(g,\theta )\sin \theta \ d\theta \\ & =\left. \left[
\sin \theta R_{n}^{(-1)}(g;\theta )\right] \right\vert _{\arccos x}^{\pi
}-\int_{\arccos x}^{\pi }R_{n}^{(-1)}(g;\theta )\cos \theta \ d\theta \\ &
=-\sin \eta \ R_{n}^{(-1)}(g;\eta)-\int_{\eta}^{\pi }R_{n}^{(-1)}(g;\theta )\cos \theta \ d\theta  \\ &
=-(1-x^{2})^{\frac{1}{2}}R_{n}^{(-1)}(g;\eta)-\int_{\eta}^{\pi
}R_{n}^{(-1)}(g;\theta )\cos \theta \ d\theta \text{,} \end{aligned}
\label{eq3}
\end{equation}
where we put $\eta = \arccos x$.

a) Any $g\in HBV$ with $M$ points of discontinuity can be represented in the following form
\begin{equation}
g\equiv g_{c}+\frac{1}{\pi }\sum_{m=1}^{M}[g]_{m}G(\theta _{m};.),
\label{cont}
\end{equation}%
where $G(\theta )=\displaystyle\frac{\pi -\theta }{2}$, $\theta \in (0,2\pi
) $, is a $2\pi -$periodic sawtooth function, $\theta _{m}$ and $[g]_{m}$,$%
\;m=1,2,\ldots,M$, are the points of discontinuities and the associated jumps of
the function $g$, respectively, and $G(\theta _{m};\theta)=G(\theta - \theta _{m})$. The function $g_{c}$ is a $2\pi -$periodic continuous function, which is piecewise smooth on $[-\pi ,\pi ]$.

From $G(\theta)=\sum_{n=1}^{\infty }\frac{\sin n\theta}{n}$, we obviously have $R_{n}^{\left( -1\right) }\left( G;\theta \right) =O\left( \frac{1}{n}\right)$ and
\begin{equation} \label{eq06}
nR_{n}^{\left( -1\right) }\left( G\left( \theta _{m};\cdot \right) ;\theta
\right) =O\left( 1\right) \text{ uniformly, }m=1,\ldots ,M\text{.}
\end{equation}

Now, $g_{c}\in C\cap HBV$. Fourier series of $g_{c}$ converges uniformly by a theorem of Waterman \cite[Theorem 2, p. 112]{WA}. Since%
\begin{equation*}
R_{n}^{(-1)}(g_{c};\theta )=\int R_{n}(g_{c};\theta )\ d\theta
\end{equation*}%
and $R_{n}(g_{c};\theta )$
converges uniformly on $\left[ -\pi ,\pi \right] $, then
\begin{equation}
nR_{n}^{(-1)}(g_{c};\theta )=o\left( 1\right) \text{\ uniformly}  \label{integ1}
\end{equation}
by a theorem of Tong \cite[Theorem, p. 252]{To}. Combining (\ref{cont}), (\ref{eq06}) and (\ref{integ1}), we get
\begin{equation}
nR_{n}^{(-1)}(g;\theta )=O\left( 1\right) \text{\ uniformly.}  \label{integ5}
\end{equation}
If $\theta$ is a point of continuity of the function $g$, Theorem \ref{aaa} implies $nR_{n}^{(-1)}(g;\theta ) \rightarrow 0$ as $n \rightarrow \infty$. Therefore,
\begin{equation*}
\underset{n\rightarrow \infty }{\lim } nR_{n}^{(-1)}(g;\theta ) \cos \theta = 0
\end{equation*}
everywhere except at a finite set of discontinuities of $g$. Applying the Lebesgue dominated convergence theorem \cite[p. 267]{ROY}, we obtain
\begin{equation}
\underset{n\rightarrow \infty }{\lim }\int_{\arccos x}^{\pi }n
R_{n}^{(-1)}(g;\theta ) \cos \theta \ d\theta =0\text{.}  \label{integ3}
\end{equation}%
Multiplying (\ref{eq3}) by $n$, letting $n\rightarrow \infty $, using (\ref%
{integ3}) and Theorem \ref{aaa} with $r=0$ and taking into account that $f\left(
x\pm 0\right) =g\left( \theta \mp 0\right) $, we get%
\begin{equation*}
\underset{n\rightarrow \infty }{\lim }n\left[ R_{n}^{(-\frac{1}{2},-\frac{1%
}{2})}(f;x)\right] ^{(-1)}=-\frac{(1-x^{2})^{\frac{1}{2}}}{\pi }%
(f(x+0)-f(x-0))\text{.}
\end{equation*}

b) For $g\in V_{2}$, applying the Cauchy-Schwartz inequality and Lemma \ref{bbb}, we get
\begin{eqnarray*}
n\left\vert R_{n}^{(-1)}(g;\theta )\right\vert  &\leq &n\sum_{k=n}^{\infty }%
\frac{\left\vert a_{k}(g)\right\vert +\left\vert b_{k}(g)\right\vert }{k} \\
&\leq &\sqrt{2}n\left( \sum_{k=n}^{\infty }\left(
a_{k}^{2}(g)+b_{k}^{2}(g)\right) \right) ^{1/2}\left( \sum_{k=n}^{\infty }%
\frac{1}{k^{2}}\right) ^{1/2} \\
&=&\sqrt{2}nO\left( \frac{1}{\sqrt{n}}\right) O\left( \frac{1}{\sqrt{n}}%
\right) =O(1)\text{,}
\end{eqnarray*}
i.e., $nR_{n}^{(-1)}(g;\theta )=O\left( 1\right) $ uniformly. The Lebesgue dominated convergence theorem yields
\begin{equation*}
\underset{n\rightarrow \infty }{\lim }\int_{\arccos x}^{\pi }n \
R_{n}^{(-1)}(g;\theta ) \cos \theta \ d\theta = \int_{\arccos x}^{\pi }\underset{n\rightarrow \infty }{\lim } n \ R_{n}^{(-1)}(g;\theta ) \cos \theta \ d\theta\text{.}
\end{equation*}
As already noticed, if $\theta$ is a point of continuity of the function $g$, Theorem \ref{aaa} implies 
$$nR_{n}^{(-1)}(g;\theta ) \rightarrow 0 \text{ as } n \rightarrow \infty.$$
Therefore, $\underset{n\rightarrow \infty }{\lim } nR_{n}^{(-1)}(g;\theta ) \cos \theta = 0$ everywhere except at a denumerable set of discontinuities of $g$. Thus, (\ref{integ3}) and consequently (\ref{alfa}) hold true for $g\in V_{2}$ without finiteness restriction on the number of discontinuities of $g$.
\end{proof}
\end{thm}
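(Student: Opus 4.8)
The plan is to reduce everything to the trigonometric Fourier setting already covered by Theorem \ref{aaa}. Put $x=\cos\eta$ with $\eta=\arccos x\in(0,\pi)$ and $g(\theta)=f(\cos\theta)$ (which inherits membership in $HBV$, resp.\ $V_2$, from $f$ under the reparametrization $\theta\mapsto\cos\theta$). Using the standard identification $R_n^{(-1/2,-1/2)}(f;\cos\theta)=R_n(g;\theta)$ of the Fourier--Chebyshev tail with the trigonometric Fourier tail of $g$, together with the substitution $y=\cos\theta$, I would first rewrite $[R_n^{(-1/2,-1/2)}(f;x)]^{(-1)}=\int_{-1}^x R_n^{(-1/2,-1/2)}(f;y)\,dy$ as $\int_\eta^\pi R_n(g;\theta)\sin\theta\,d\theta$. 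An integration by parts, with $R_n^{(-1)}(g;\theta)$ serving as an antiderivative of $R_n(g;\theta)$, then splits this into the boundary term $-(1-x^2)^{1/2}R_n^{(-1)}(g;\eta)$ (the contribution at $\theta=\pi$ vanishing since $\sin\pi=0$) and the remainder $-\int_\eta^\pi R_n^{(-1)}(g;\theta)\cos\theta\,d\theta$. Multiplying by $n$ and letting $n\to\infty$, Theorem \ref{aaa} with $r=0$ gives $nR_n^{(-1)}(g;\eta)\to-\tfrac1\pi[g(\eta+0)-g(\eta-0)]$; since $g(\eta\mp0)=f(x\pm0)$, the boundary term tends to exactly $-\tfrac{(1-x^2)^{1/2}}{\pi}(f(x+0)-f(x-0))$, the right-hand side of \eqref{alfa}. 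Everything then rests on showing $n\int_\eta^\pi R_n^{(-1)}(g;\theta)\cos\theta\,d\theta\to0$.

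For that limit I would establish two facts and invoke the Lebesgue dominated convergence theorem: (i) $nR_n^{(-1)}(g;\theta)=O(1)$ uniformly in $\theta$; and (ii) $nR_n^{(-1)}(g;\theta)\to0$ at every point of continuity of $g$. Fact (ii) is immediate from Theorem \ref{aaa} with $r=0$, the jump there being zero. As for the exceptional set: in case a) the discontinuities of $g$ are finite in number by hypothesis, while in case b) $g\in V_2\subseteq HBV$ is regulated and so has at most countably many discontinuities; either way this set has Lebesgue measure zero. Granting (i), the integrand $nR_n^{(-1)}(g;\theta)\cos\theta$ is bounded by a fixed constant on $[\eta,\pi]$ and tends to $0$ almost everywhere, so the integral tends to $0$ and, combined with the boundary term, \eqref{alfa} follows.

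The substantive step, and the one where the two cases part ways, is the uniform bound (i). In case a), using that $g$ has only finitely many jumps, I would invoke the decomposition $g=g_c+\tfrac1\pi\sum_{m=1}^M[g]_m G(\theta_m;\cdot)$, where $G(\theta)=\sum_{k\ge1}\frac{\sin k\theta}{k}$ is the sawtooth function and $g_c\in C\cap HBV$. For each sawtooth term one computes $R_n^{(-1)}(G;\theta)=-\sum_{k\ge n}\frac{\cos k\theta}{k^2}=O(1/n)$ uniformly, so $nR_n^{(-1)}(G(\theta_m;\cdot);\theta)=O(1)$ uniformly; for $g_c$, Waterman's theorem gives uniform convergence of its Fourier series, hence $R_n(g_c;\theta)\to0$ uniformly, and then Tong's theorem on integrated uniformly convergent series yields $nR_n^{(-1)}(g_c;\theta)=o(1)$ uniformly. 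Adding these gives (i). In case b), $f\in V_2$ offers no such finite decomposition, so I would instead apply Lemma \ref{bbb}: Cauchy--Schwarz gives $n|R_n^{(-1)}(g;\theta)|\le\sqrt2\,n\big(\sum_{k\ge n}\rho_k^2(g)\big)^{1/2}\big(\sum_{k\ge n}k^{-2}\big)^{1/2}=\sqrt2\,n\cdot O(n^{-1/2})\cdot O(n^{-1/2})=O(1)$, which is (i) outright. The main obstacle is thus exactly securing (i)---trivial for the sawtooth part, but requiring Tong's theorem in case a) and the $V_2$ Bessel-type estimate of Lemma \ref{bbb} in case b).
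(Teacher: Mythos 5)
Your proposal is correct and follows essentially the same route as the paper: the same substitution $y=\cos\theta$ and integration by parts, the same sawtooth-plus-continuous decomposition with Waterman's and Tong's theorems for case a), the same Cauchy--Schwarz argument via Lemma \ref{bbb} for case b), and the same dominated-convergence step to kill the integral term. No substantive differences to report.
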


\begin{rem}
Theorem \ref{ccc} transfers a corresponding result by Kvernadze, Hagstrom
and Shapiro \cite{KHS} from the trigonometric case to the setting of Fourier-Chebyshev series. At the same time, it generalizes their result in two directions. If the finiteness assumption on the number of
discontinuities of a function is kept, then we can deal with the whole class
$HBV$, as demonstrated in part a) of the proof. On the other hand, if the
attention is restricted to the subclass $V_2$, then part b) shows that the
finiteness assumption can be removed.
\end{rem}

\begin{rem}
In view of Theorem \ref{AVth} above, the part b) of Theorem \ref{ccc} is obviously valid for the Watermann class $\left\{ n^{\frac{1}{2} }\right\} BV$ and Chanturiya's classes $V\left[n^{\alpha }\right]$, $0<\alpha<\frac{1}{2}$.
\end{rem}

\bigskip









\end{document}